\def\algspacing{\alg@unmargin}
\newlength{\algorithmwidth}
\theoremstyle{plain}
\newtheorem{theorem}{Theorem}[section]
\newtheorem{lemma}[theorem]{Lemma}
\theoremstyle{definition}
\theoremstyle{remark}
\newtheorem*{remark}{Remark}
\numberwithin{equation}{section}
\newcommand{\<}{\left\langle}
\renewcommand{\>}{\right\rangle}
\newcommand{\bigO}{\mathrm{O}}
\newcommand{\defby}{\overset{\mathrm{\scriptscriptstyle{def}}}{=}}
\def \E {\mathbb{E}}
\newcommand{\vct}[1]{\bm{#1}}
\newcommand{\mtx}[1]{\bm{#1}}
\begin{document}
\bibliographystyle{plain}
\setlength{\parindent}{0in}
\parskip 7.2pt

\title[]{Two-subspace Projection Method for Coherent Overdetermined Systems}
%\author{Deanna Needell and Rachel Ward\\
%  \vspace{-.1cm}\\
%  $^1$Department of Mathematics, Claremont McKenna College, Claremont, CA\\
%    \vspace{-.3cm}\\
%  $^2$Mathematics Department, University of Texas at Austin, Austin TX}
\author{Deanna Needell and Rachel Ward}
%\institute{D. Needell \at
%              Department of Mathematics, Claremont McKenna College, Claremont, CA\\
%              \email{dneedell@cmc.edu}           %  \\
%%             \emph{Present address:} of F. Author  %  if needed
%           \and
%            R. Ward \at
%              Mathematics Department, University of Texas at Austin, Austin TX\\
%              \email{rward@math.utexas.edu}
%}
\date{\today}

\begin{abstract}
We present a Projection onto Convex Sets (POCS) type algorithm for solving systems of linear equations.  POCS methods have found many applications ranging from computer tomography to digital signal and image processing.  The Kaczmarz method is one of the most popular solvers for overdetermined systems of linear equations due to its speed and simplicity.  Here we introduce and analyze an extension of the Kaczmarz method that iteratively projects the estimate onto a solution space given by two randomly selected rows.  We show that this projection algorithm provides exponential convergence to the solution in expectation.  The convergence rate improves upon that of the standard randomized Kaczmarz method when the system has correlated rows.  Experimental results confirm that in this case our method significantly outperforms the randomized Kaczmarz method.

\end{abstract}
\keywords{Kaczmarz method, randomized Kaczmarz method, computer tomography, signal processing}
\subjclass[2000]{65J20; 47J06}
\maketitle

\section{Introduction}\label{sec:intro}
%\notate{Somewhere in here add application with coherent rows - to tech report too}
We consider a consistent system of linear equations of the form
$$
\mtx{A}\vct{x} = \vct{b},
$$
where $\vct{b}\in\mathbb{C}^m$ and $\mtx{A}\in\mathbb{C}^{m\times n}$ is a full-rank $m\times n$ matrix that is overdetermined, having more rows than columns ($m\geq n$).  When the number of rows of $\mtx{A}$ is large, it is far too costly to invert the matrix to solve for $\vct{x}$, so one may utilize an iterative solver such as the Projection onto Convex Sets (POCS) method, used in many applications of signal and image processing~\cite{CFMSS92,SS87:Applications}.  The Kaczmarz method is often preferred, iteratively cycling through the rows of $\mtx{A}$ and orthogonally projecting the estimate onto the solution space given by each row~\cite{K37:Angena}.  Precisely, let us denote by $\vct{a_1}$, $\vct{a_2}$, $\ldots$, $\vct{a_m}$ the rows of $\mtx{A}$ and $b_1$, $b_2$, $\ldots$, $b_m$ the coordinates of $\vct{b}$.  We assume each pair of rows is linear independent, and for simplicity, we will assume throughout that the matrix $\mtx{A}$ is \textit{standardized}, meaning that each of its rows has unit Euclidean norm; generalizations from this case will be straightforward.  Given some trivial initial estimate $\vct{x_0}$, the Kaczmarz method cycles through the rows of $\mtx{A}$ and in the $k$th iteration projects the previous estimate $\vct{x_k}$ onto the solution hyperplane of $\langle \vct{a_i}, \vct{x}\rangle = b_i$ %\notate{Since we are in $\C^m$, shouldn't there be a complex conjugate in the inner product?} 
where $i = k$ mod $m$, 
$$
\vct{x_{k+1}} = \vct{x_k} + (b_i - \langle \vct{a_i}, \vct{x_{k}} \rangle)\vct{a_i}.
$$

Theoretical results about the rate of convergence of the Kaczmarz method have been difficult to obtain, and most are based on quantities which are themselves hard to compute~\cite{DH97:Therate,G05:Onthe}. Even more importantly, the method as we have just described depends heavily on the ordering of the rows of $\mtx{A}$.  A malicious or unlucky ordering may therefore lead to extremely slow convergence.  To overcome this, one can select the rows of $\mtx{A}$ in a \emph{random} fashion rather than cyclically~\cite{HM93:Algebraic,N86:TheMath}.  Strohmer and Vershynin analyzed a randomized version of the Kaczmarz method that in each iteration selects a row of $\mtx{A}$ with probability proportional to the square of its Euclidean norm~\cite{SV09:Arand,SV06:Arandom}.  Thus in the standardized case we consider, a row of $\mtx{A}$ is chosen uniformly at random.  This randomized Kaczmarz method is described by the following pseudocode.

\begin{algorithm}[thb]
\caption{Randomized Kaczmarz}
	\label{alg:rk}
\centering \fbox{
\begin{minipage}{.99\textwidth} 
\vspace{4pt}
\alginout{Standardized matrix $\mtx{A}$, vector $\vct{b}$}
{An estimation $\vct{x_k}$ of the unique solution $\vct{x}$ to $\mtx{A}\vct{x} = \vct{b}$
}
\vspace{8pt}\hrule\vspace{8pt}

\begin{algtab*}
Set $\vct{x_0}$.
	 	\hfill \{ Trivial initial approximation \} \\		 
$k \leftarrow 0$ \\

\algrepeat
	$k \leftarrow k + 1$ \\
	Select $ r \in \{1, 2, \ldots, n\}$
		\hfill \{ Randomly select a row of $\mtx{A}$ \} \\
		Set $ \vct{x_k} \leftarrow \vct{x_{k-1}} + (b_r - \langle \vct{a_r},\vct{x_{k-1}} \rangle)\vct{a_r}$
		\hfill \{ Perform projection \} \\
%\alguntil{Halting criterion} \\
	
\end{algtab*}
\vspace{-10pt}
\end{minipage}}
%\vspace{-0.5in}
\end{algorithm}

Note that this method as stated selects each row \textit{with replacement}, see~\cite{Valley} for a discussion on the differences in performance when selecting with and without replacement.  Strohmer and Vershynin show that this method exhibits exponential convergence in expectation~\cite{SV09:Arand,SV06:Arandom},
\begin{equation}\label{SV}
\E \|\vct{x_k} - \vct{x}\|_2^2 \leq \left( 1 - \frac{1}{R}\right)^k\|\vct{x_0} - \vct{x}\|_2^2,\quad\text{where}\quad R \defby \|\mtx{A}\|_F^2\|\mtx{A}^{-1}\|^2.
\end{equation}
%\notate{should we put the definition of R in the above equation to highlight it more? Should we instead use $R = \| \mtx{A}^{-1} \|^2$ and put $m^2 R$ in the denom?}
Here and throughout, $\|\cdot\|_2$ denotes the vector Euclidean norm, $\| \cdot \|$ denotes the matrix spectral norm, $\|\cdot\|_F$ denotes the matrix Frobenius norm, and the inverse $\|\mtx{A}^{-1}\| = \inf\{M : M\|\mtx{A}\vct{x}\|_2 \geq \|\vct{x}\|_2 \text{ for all }\vct{x}\}$ is well-defined since $\mtx{A}$ is full-rank.  This bound shows that when $\mtx{A}$ is well conditioned, the randomized Kaczmarz method will converge exponentially to the solution in just $\bigO(n)$ iterations (see Section 2.1 of~\cite{SV09:Arand} for details).  The cost of each iteration is the cost of a single projection and takes $\bigO(n)$ time, so the total runtime is just $\bigO(n^2)$.  This is superior to Gaussian elimination which takes $\bigO(mn^2)$ time, especially for very large systems.  The randomized Kaczmarz method even substantially outperforms the well-known conjugate gradient method in many cases~\cite{SV09:Arand}.  

Leventhal and Lewis show that for certain probability distributions, the expected rate of convergence can be bounded in terms of other natural linear-algebraic quantities.  They propose generalizations to other convex systems~\cite{leventhal2010randomized}.
Recently, Chen and Powell proved that for certain classes of random matrices $\mtx{A}$, the randomized Kaczmarz method convergences exponentially to the solution not only in expectation but also almost surely~\cite{CP12:almost}.    

In the presence of noise, one considers the possibly inconsistent system $\mtx{A}\vct{x} + \vct{w} \approx \vct{b}$ for some error vector $\vct{w}$.  In this case the randomized Kaczmarz method converges exponentially fast to the solution within an error threshold~\cite{N10:rknoisy},
$$
\E\|\vct{x_k} - \vct{x}\|_2 \leq \left( 1 - \frac{1}{R}\right)^{k/2}\|\vct{x_0} - \vct{x}\|_2 + \sqrt{R}\|\vct{w}\|_{\infty},
$$
where $R$ the the scaled condition number as in \eqref{SV} and $\|\cdot\|_{\infty}$ denotes the largest entry in magnitude of its argument.  This error is sharp in general~\cite{N10:rknoisy}.  Modified Kaczmarz algorithms can also be used to solve the least squares version of this problem, see for example~\cite{drineas2007faster,ENP10:semi,HN90:Onthe,censor1983strong} and the references therein.

\subsection{Coherent systems}  

Although the convergence results for the randomized Kaczmarz method hold for any consistent system, the factor $\frac{1}{R}$ in the convergence rate may be quite small for matrices with many correlated rows.  %\notate{Take this example out?  It seems to end abruptly, and I don't think we necessarily need a motivating example} 
Consider for example the reconstruction of a bandlimited function from nonuniformly spaced samples, as often arises in geophysics as it can be physically challenging to take uniform samples.  Expressed as a system of linear equations, the sampling points form the rows of a matrix $\mtx{A}$; for points that are close together, the corresponding rows will be highly correlated.  

To be precise, we examine the pairwise \textit{coherence} of a standardized matrix $\mtx{A}$ by defining the quantities
\begin{equation}\label{mus}
\Delta = \Delta(\mtx{A}) = \max_{j\ne k}|\langle \vct{a_j}, \vct{a_k}\rangle| \quad{and}\quad
\delta = \delta(\mtx{A}) = \min_{j\ne k}|\langle \vct{a_j}, \vct{a_k}\rangle|.
\end{equation}

\begin{remark}These quantities measure how correlated the rows of the matrix $\mtx{A}$ are.  We point out that this notion of coherence coincides with that of signal processing terminology and is different than the alternative definition which measures the correlation between singular vectors and the canonical vectors.  The notion of coherence used here simply gives a measure of pairwise row correlation. Analysis using the notion of coherence for singular vectors may also lead to improved convergence rates for these methods, and we leave this as future work.
\end{remark}

Note also that because $\mtx{A}$ is standardized, $0 \leq \delta \leq \Delta \leq 1$. 
It is clear that when $\mtx{A}$ has high coherence parameters, $\|\mtx{A}^{-1}\|$ is very small and thus the factor $R$ in~\eqref{SV} is also small, leading to a weak bound on the convergence.  Indeed, when the matrix has highly correlated rows, the angles between successive orthogonal projections are small and convergence is stunted.   We can explore a wider range of orthogonal directions by looking towards solution hyperplanes spanned by  \emph{pairs} of rows of $\mtx{A}$.  We thus propose a modification to the randomized Kaczmarz method where each iteration performs an orthogonal projection onto a two-dimensional subspace spanned by a randomly-selected pair of rows.  We point out that the idea of projecting in each iteration onto a subspace obtained from multiple rows rather than a single row has been previously investigated numerically, see e.g.~\cite{FS95,CFMSS92}.
 
With this as our goal, a single iteration of the modified algorithm will consist of the following steps.  Let ${\vct{x_k}}$ denote the current estimation in the $k$th iteration.

\begin{itemize}
\item Select two distinct rows $\vct{a_r}$ and $\vct{a_s}$ of the matrix $\mtx{A}$ at random
\item Compute the translation parameter $\varepsilon$
\item Perform an intermediate projection: $\vct{y} \leftarrow \vct{x_k} + \varepsilon(b_r - \langle \vct{x_k}, \vct{a_r}\rangle)\vct{a_r}$
\item Perform the final projection to update the estimation: $\vct{x_{k+1}} \leftarrow \vct{y} + (b_s - \langle \vct{y}, \vct{a_s}\rangle)\vct{a_s}$
\end{itemize}

In general, the optimal choice of $\varepsilon$ at each iteration of the two-step procedure corresponds to subtracting from $\vct{x_k}$ its orthogonal projection onto the solution space $\{\vct{x}: \langle \vct{a_r}, \vct{x}\rangle = b_r \text{ and } \langle \vct{a_s}, \vct{x}\rangle = b_s\}$, which motivates the name two-subspace Kaczmarz method. %\notate{white space before image}
By \emph{optimal choice} of $\varepsilon$, we mean the value $\varepsilon_{opt}$ minimizing the residual $\|\vct{x} - \vct{x_{k+1}}\|_2^2$.   Expanded, this reads  
$$
\|\vct{x} - \vct{x_{k+1}}\|_2^2 = \|\varepsilon(b_r - \<\vct{x_k}, \vct{a_r}\>)(\vct{a_r} - \<\vct{a_s}, \vct{a_r}\>\vct{a_s}) + \vct{x_k} - \vct{x} + (b_s - \<\vct{x_k}, \vct{a_s}\>)\vct{a_s}\|_2^2.
$$

Using that the minimizer of $\|\gamma \vct{w} + \vct{z}\|_2^2$ is $\gamma = -\frac{\<\vct{w},\vct{z}\>}{\|\vct{w}\|_2^2}$, we see that 
\begin{equation*}%\label{eopt}
\varepsilon_{opt} = \frac{-\<\vct{a_r} - \<\vct{a_s}, \vct{a_r}\>\vct{a_s} , \vct{x_k} - \vct{x} + (b_s - \<\vct{x_k},\vct{a_s}\>)\vct{a_s}\>}{(b_r - \<\vct{x_k}, \vct{a_r}\>)\|\vct{a_r} - \<\vct{a_s}, \vct{a_r}\>\vct{a_s}\|_2^2}.
\end{equation*}
Note that the unknown vector $\vct{x}$ appears in this expression only through its observable inner products, and so $\varepsilon_{opt}$ is computable.  
After some algebra, one finds that the two-step procedure with this choice of $\varepsilon_{opt}$ can be re-written in the following numerically stable formulation.

\begin{algorithm}[thb]
\caption{Two-subspace Kaczmarz}
	\label{alg:r2k}
\centering \fbox{
\begin{minipage}{.99\textwidth} 
\vspace{4pt}
\alginout{Matrix $\mtx{A}$, vector $\vct{b}$}
{An estimation $\vct{x_k}$ of the unique solution $\vct{x}$ to $\mtx{A}\vct{x} = \vct{b}$
}
\vspace{8pt}\hrule\vspace{8pt}

\begin{algtab*}
Set $\vct{x_0}$.
	 	\hfill \{ Trivial initial approximation \} \\		 
$k \leftarrow 0$ \\

\algrepeat
	$k \leftarrow k + 1$ \\
	Select $ r, s \in \{1, 2, \ldots, n\}$
		\hfill \{ Select two distinct rows of $\mtx{A}$ uniformly at random \} \\
		Set $ \mu_k \leftarrow \langle \vct{a_{r}}, \vct{a_{s}}\rangle$
		\hfill \{ Compute correlation \} \\
		Set $ \vct{y_k} \leftarrow \vct{x_{k-1}} + (b_s - \langle \vct{x_{k-1}}, \vct{a_s}\rangle)\vct{a_s}$
		\hfill \{ Perform intermediate projection \} \\
		Set $ \vct{v_k} \leftarrow \frac{\vct{a_r} - \mu_k \vct{a_s}}{\sqrt{1-|\mu_k|^2}}$
		\hfill \{ Compute vector orthogonal to $\vct{a_s}$ in direction of $\vct{a_r}$ \} \\
		Set $ \beta_k \leftarrow \frac{b_r - b_s\mu_k}{\sqrt{1-|\mu_k|^2}}$
		\hfill \{ Compute corresponding measurement \}\\
	$\vct{x_k} \leftarrow \vct{y_k} + (\beta_k - \langle \vct{y_k}, \vct{v_k}\rangle)\vct{v_k}$
		\hfill \{ Perform projection \} \\	
%\alguntil{Halting criterion} \\
	
\end{algtab*}
\vspace{-10pt}
\end{minipage}}
%\vspace{-0.5in}
\end{algorithm}

We note that by the assumption that each pair of rows is linearly independent, we have $|\mu_k| \ne 1$ for all $k$ so that division is always well-defined.    
Our main result shows that the two-subspace Kaczmarz algorithm provides the same exponential convergence rate as the standard method in general, and substantially improved convergence when the rows of $\mtx{A}$ are coherent.  Figure \ref{fig:A} plots two iterations of the one-subspace random Kaczmarz algorithm and compares this to a single iteration of the two-subspace Kaczmarz algorithm.

\begin{figure}[ht]
\centering
\subfigure[]{
   \includegraphics[scale=0.5] {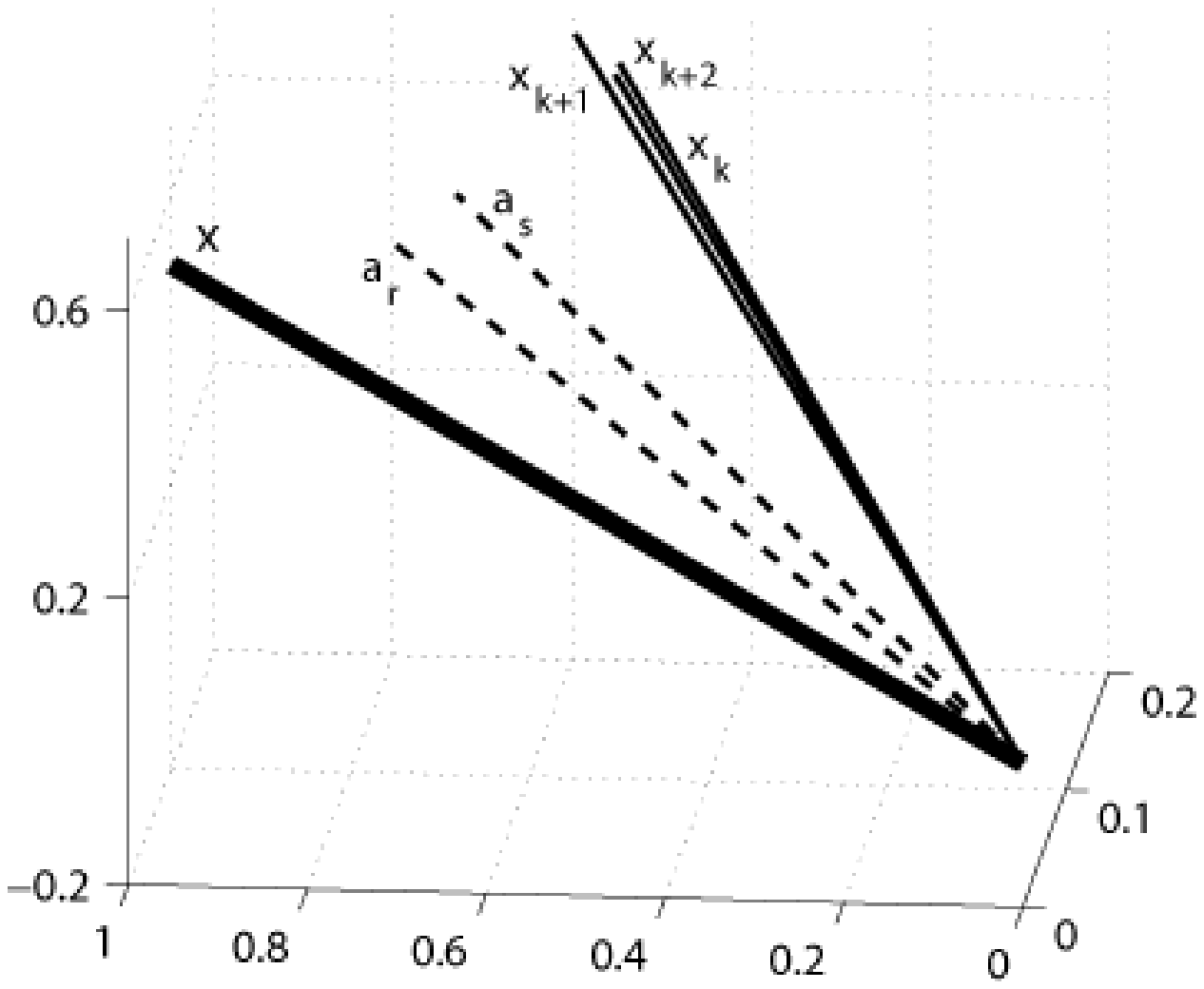}
 }
 \subfigure[]{
   \includegraphics[scale=0.5] {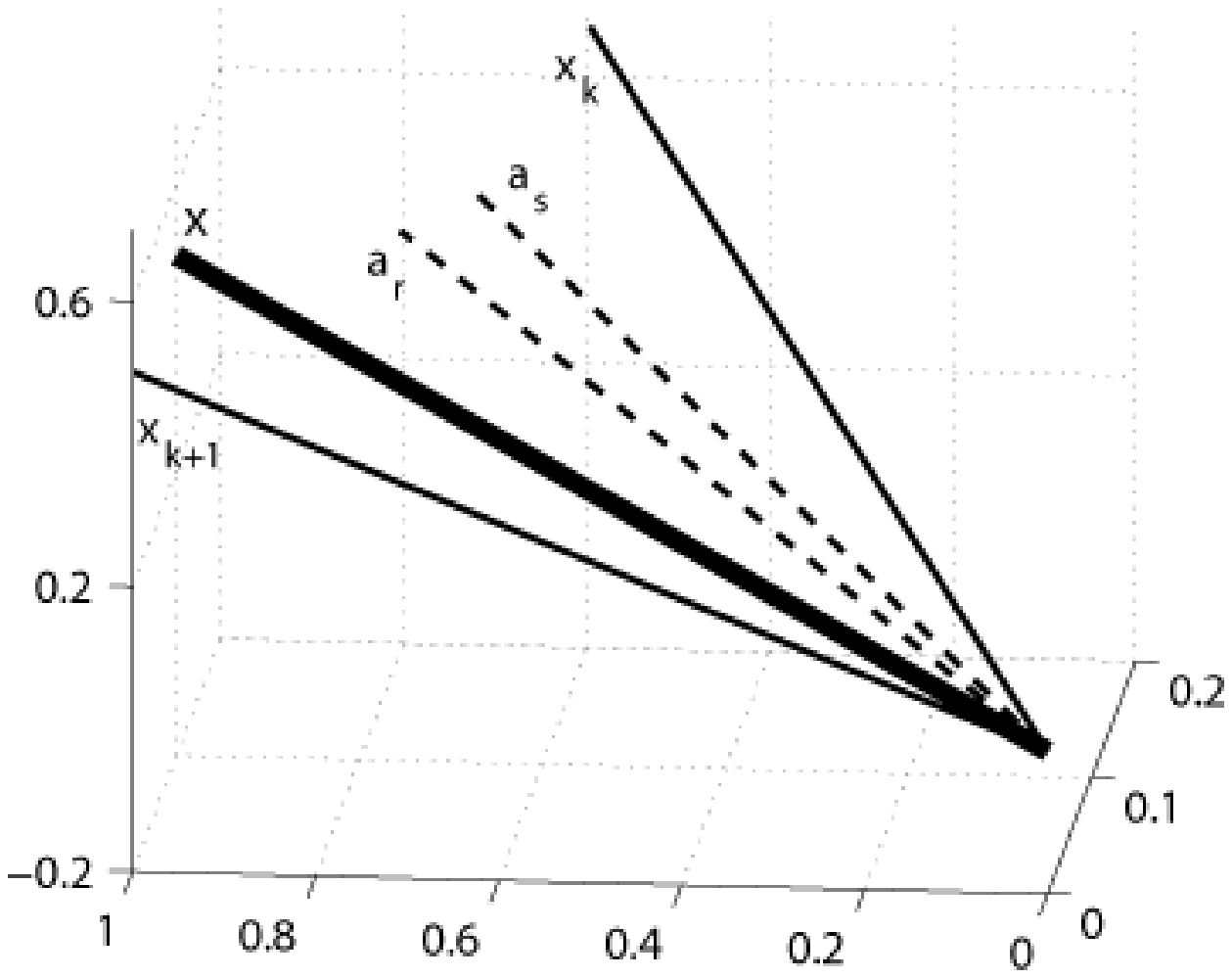}
 }
\caption{For coherent systems,  the one-subspace randomized Kaczmarz algorithm (a) converges more slowly than the two-subspace Kaczmarz algorithm (b).} \label{fig:A}
\end{figure}

\begin{theorem}\label{thm:main}
Let $\mtx{A}$ be a full-rank standardized matrix with $n$ columns and  $m > n$ rows and suppose $\mtx{A}\vct{x} = \vct{b}$.  Let $\vct{x_k}$ denote the estimation to the solution $\vct{x}$ in the $k$th iteration of the two-subspace Kaczmarz method.  Then
$$
\mathbb{E}\|\vct{x} - \vct{x_k}\|_2^2 \leq \left(\left(1 - \frac{1}{R}\right)^2 - \frac{D}{R}\right)^k\|\vct{x} - \vct{x}_{0}\|_2^2 ,
$$

where $D = \min\Big\{ \frac{\delta^2(1-\delta)}{1+\delta}, \frac{\Delta^2(1-\Delta)}{1+\Delta} \Big\} $, $\Delta$ and $\delta$ are the coherence parameters~\eqref{mus}, and $R = \| \mtx{A} \|_{F}^2 \|\mtx{A}^{-1}\|^2$ denotes the scaled condition number.
\end{theorem}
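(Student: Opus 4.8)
The plan is to prove a one‑step contraction conditional on the current iterate and then iterate. Fix $\vct{x}_{k-1}$, write $\vct{d}=\vct{x}-\vct{x}_{k-1}$, $c_i=\pr{\vct{d}}{\vct{a}_i}$, and let $(r,s)$ be the chosen pair with $\mu=\pr{\vct{a}_r}{\vct{a}_s}$. As observed in the derivation of $\varepsilon_{opt}$ preceding Algorithm~\ref{alg:r2k}, the update produces exactly the orthogonal projection of $\vct{x}_{k-1}$ onto $\{\vct{z}:\pr{\vct{a}_r}{\vct{z}}=b_r,\ \pr{\vct{a}_s}{\vct{z}}=b_s\}$; indeed $\vct{v}_k\perp\vct{a}_s$ and $\pr{\vct{v}_k}{\vct{x}}=\beta_k$, so the step lands on both hyperplanes. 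Since $\vct{x}$ lies on this codimension‑two affine subspace, $\vct{x}-\vct{x}_k\perp\vct{x}_k-\vct{x}_{k-1}$, and the Pythagorean theorem gives $\|\vct{x}-\vct{x}_k\|_2^2=\|\vct{d}\|_2^2-\|\vct{x}_k-\vct{x}_{k-1}\|_2^2$. Because $\bigl(1-\tfrac1R\bigr)^2-\tfrac DR=1-\tfrac{2+D}{R}+\tfrac1{R^2}$, it suffices to show
$$\mathbb{E}\bigl[\,\|\vct{x}_k-\vct{x}_{k-1}\|_2^2\ \big|\ \vct{x}_{k-1}\,\bigr]\ \ge\ \Bigl(\frac{2+D}{R}-\frac1{R^2}\Bigr)\|\vct{d}\|_2^2 ,$$
after which the theorem follows by the tower property applied over $k$ (taking full expectations and multiplying the per‑step factors).

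Next I would record the exact one‑step gain. Decomposing $\vct{x}_k-\vct{x}_{k-1}$ along the orthogonal pair $\vct{a}_s,\vct{v}_k$ spanning $\Span\{\vct{a}_r,\vct{a}_s\}$ and using $\pr{\vct{d}-c_s\vct{a}_s}{\vct{a}_s}=0$, one obtains
$$\|\vct{x}_k-\vct{x}_{k-1}\|_2^2\ =\ |c_s|^2+\frac{|c_r-\mu c_s|^2}{1-|\mu|^2}\ =\ \frac{|c_r|^2+|c_s|^2-2\,\mathrm{Re}(\bar\mu\,c_r\overline{c_s})}{1-|\mu|^2}\ =\ \bigl\|P_{\Span\{\vct{a}_r,\vct{a}_s\}}\vct{d}\bigr\|_2^2 .$$

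The heart of the argument is to average this over the uniformly random distinct pair and bound it below, separating a ``Strohmer--Vershynin squared'' contribution from a coherence gain. Writing $|c_s|^2+\tfrac{|c_r-\mu c_s|^2}{1-|\mu|^2}=|c_s|^2+|c_r-\mu c_s|^2+\tfrac{|\mu|^2}{1-|\mu|^2}|c_r-\mu c_s|^2$ and conditioning on $s$, the first two summands equal $|c_s|^2+|\pr{\vct{d}_s}{\vct{a}_r}|^2$ with $\vct{d}_s:=\vct{d}-c_s\vct{a}_s\perp\vct{a}_s$; averaging $\vct{a}_r$ over $r\ne s$ and using $\sum_{r\ne s}\vct{a}_r\vct{a}_r^{\ast}\succeq\mtx{A}^{\ast}\mtx{A}-\vct{a}_s\vct{a}_s^{\ast}$ together with $\|\mtx{A}\vct{z}\|_2^2\ge\|\vct{z}\|_2^2/\|\mtx{A}^{-1}\|^2$, $\|\mtx{A}\|_F^2=m$ and $R=m\|\mtx{A}^{-1}\|^2$ recovers precisely the factor $\bigl(1-\tfrac1R\bigr)^2$ (that is, these terms alone contribute $\bigl(\tfrac2R-\tfrac1{R^2}\bigr)\|\vct{d}\|_2^2$). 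The remaining term $\mathbb{E}\,\tfrac{|\mu_{rs}|^2}{1-|\mu_{rs}|^2}\,|\pr{\vct{d}_s}{\vct{a}_r}|^2$ is where $D$ enters: after re‑expressing it so that each pair contributes according to the scalar function $g(t)=\tfrac{t^2(1-t)}{1+t}$ evaluated at $t=|\mu_{rs}|\in[\delta,\Delta]$, one checks by elementary calculus that $g$ is unimodal on $[0,1]$ (increasing then decreasing, with a critical point at $t=(\sqrt5-1)/2$), so $\min_{[\delta,\Delta]}g=\min\{g(\delta),g(\Delta)\}=D$; bounding each per‑pair weight below by $D$ and invoking $\sum_r|c_r|^2=\|\mtx{A}\vct{d}\|_2^2\ge\tfrac mR\|\vct{d}\|_2^2$ once more yields the extra $\tfrac DR\|\vct{d}\|_2^2$, completing the bound.

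The main obstacle is exactly this last estimate. Unlike in the single‑row analysis, the cross term $\mathrm{Re}(\bar\mu_{rs}c_r\overline{c_s})$ has an uncontrolled sign, so an individual summand $\|P_{\Span\{\vct{a}_r,\vct{a}_s\}}\vct{d}\|_2^2$ can be \emph{smaller} than $|c_r|^2+|c_s|^2$, and the crude pointwise bound $\|P_{\Span\{\vct{a}_r,\vct{a}_s\}}\vct{d}\|_2^2\ge(|c_r|^2+|c_s|^2)/(1+|\mu_{rs}|)$ discards the gain entirely; a purely pairwise argument is therefore doomed. One must instead exploit that $\vct{d}$ cannot be strongly correlated with too many rows at once: quantitatively, that $\sum_r|c_r|^2=\|\mtx{A}\vct{d}\|_2^2$ is pinned from below by $\tfrac mR\|\vct{d}\|_2^2$ and that the Gram matrix $\mtx{A}\mtx{A}^{\ast}$ is positive semidefinite, which together control the aggregate of the cross terms, in combination with the constraint $\delta\le|\mu_{rs}|\le\Delta$. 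This balancing is where the bookkeeping lives (and the mild slack there is why the $-\tfrac1{R^2}$ term and the conservative choice $D=\min\{g(\delta),g(\Delta)\}$ appear). Once the conditional one‑step inequality is established, the iteration step is routine.
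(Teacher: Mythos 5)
Your overall architecture coincides with the paper's: a conditional one-step contraction via the Pythagorean identity for the orthogonal projection onto $\Span\{\vct{a_r},\vct{a_s}\}$, a split of the per-step gain into a ``two steps of randomized Kaczmarz'' part worth $\tfrac{2}{R}-\tfrac{1}{R^2}$ plus a coherence surplus, the observation that $g(t)=\tfrac{t^2(1-t)}{1+t}$ is unimodal on $[0,1]$ with critical point $(\sqrt5-1)/2$ so that $\min_{[\delta,\Delta]}g=\min\{g(\delta),g(\Delta)\}=D$, and the closing estimate $\sum_r|\pr{\vct{d}}{\vct{a_r}}|^2=\|\mtx{A}\vct{d}\|_2^2\ge\tfrac{m}{R}\|\vct{d}\|_2^2$. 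All of these steps check out, and your explicit derivation of the $(1-\tfrac1R)^2$ factor from $\sum_{r\ne s}\vct{a_r}\vct{a_r}^{\ast}\succeq\mtx{A}^{\ast}\mtx{A}-\vct{a_s}\vct{a_s}^{\ast}$ is a legitimate substitute for the paper's appeal to the Strohmer--Vershynin bound for two steps without replacement.

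The gap sits exactly where you flag ``the main obstacle,'' and your proposed resolution points the wrong way. You need the surplus $\frac{1}{m^2-m}\sum_{r\ne s}\frac{\mu_{r,s}^2}{1-\mu_{r,s}^2}\,|c_r-\mu_{r,s}c_s|^2$ to dominate $\frac{1}{m^2-m}\sum_{r<s}g(|\mu_{r,s}|)\,(c_r^2+c_s^2)$, but you declare that ``a purely pairwise argument is therefore doomed'' and gesture instead at controlling the aggregate of the cross terms through positivity of the Gram matrix $\mtx{A}\mtx{A}^{\ast}$; that global argument is never made precise, and it is unclear how it would recover the per-pair weight $g(|\mu_{r,s}|)$ that your own bookkeeping requires. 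In fact the pairwise argument is not doomed --- it is the paper's proof. The sum runs over \emph{ordered} pairs, so each unordered pair $\{r,s\}$ contributes both $(c_r-\mu c_s)^2$ and $(c_s-\mu c_r)^2$; their sum is $(1+\mu^2)(c_r^2+c_s^2)-4\mu c_rc_s\ge(1-|\mu|)^2(c_r^2+c_s^2)$ by AM--GM, and multiplying by $\tfrac{\mu^2}{1-\mu^2}$ gives exactly $g(|\mu|)(c_r^2+c_s^2)$. This is the paper's inequality $(\theta u-\pi v)^2+(\theta v-\pi u)^2\ge(|\pi|-|\theta|)^2(u^2+v^2)$ with $\theta=\mu^2/\sqrt{1-\mu^2}$ and $\pi=\mu/\sqrt{1-\mu^2}$: your worry about the uncontrolled sign of $c_rc_s$ is real for a single ordered summand, but symmetrizing over the two orderings of each pair eliminates it. With that two-line substitution your proof closes and is essentially the paper's; without it, the central inequality of the theorem remains unproved.
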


\begin{remarks}
{\bfseries 1. }When $\Delta = 1$ or $\delta = 0$ we recover the same convergence rate as provided for the standard Kaczmarz method~\eqref{SV} since the two-subspace method utilizes two projections per iteration. 

{\bfseries 2. }The bound presented in Theorem~\ref{thm:main} is a pessimistic bound.  Even when $\Delta = 1$ or $\delta = 0$, the two-subspace method improves on the standard method if any rows of $\mtx{A}$ are highly correlated (but not equal). This is evident from the proof of Theorem~\ref{thm:main} in Section~\ref{sec:proofs} via Lemma~\ref{lem:main} but we present the bound for simplicity.  Under other assumptions on the matrix $\mtx{A}$, improvements can be made to the convergence bound of Theorem~\ref{thm:main}.  For example, if one assumes that the correlations between the rows are non-negative, one obtains the bound
$$
\mathbb{E}\|\vct{x} - \vct{x_k}\|_2^2 \leq \left(\left(1 - \frac{1}{R}\right)^2 - \frac{D}{R} - \frac{E}{Q}\right)^k\|\vct{x} - \vct{x}_{0}\|_2^2 ,
$$
where $E = 4\delta^3$ and $Q = \|\mtx{\Omega}^{-1}\|^2\|\mtx{\Omega}\|_F^2$ is the scaled condition number of the $m^2\times n$ matrix $\mtx{\Omega}$ whose rows consist of normalized row differences from $\mtx{A}$, $\vct{a_j} - \vct{a_i}$.  See~\cite{NW12:2srkTech} for details and the proof of this result.

{\bfseries 3. }Theorem~\ref{thm:main} yields a simple bound on the expected runtime of the two-subspace randomized Kaczmarz method.  To achieve accuracy $\varepsilon$, meaning
$$
\mathbb{E}\|\vct{x_k} - \vct{x}\|_2^2 \leq \varepsilon^2\|\vct{x_0} - \vct{x}\|_2^2,
$$
one asks that
$$
\mathbb{E}(k) \leq \frac{2\log\varepsilon}{\log\left((1-\frac{1}{R})^2 - \frac{D}{R}\right)}.
$$
If $\mtx{A}$ is well-conditioned then $R = \bigO(n)$ and we thus require that 
$$
k = \bigO\left(\frac{2n}{2+D - \frac{1}{n}}\right).
$$
Since each iteration requires $\bigO(n)$ time, for large enough $n$ this again yields a total runtime of $\bigO(n^2)$ as in the standard randomized Kaczmarz case~\cite{SV09:Arand}, but with an improvement in the constant factors.

{\bfseries 4. }When the rows of $\mtx{A}$ have arbitrary norms, one may simply select pairs of rows uniformly at random, normalize prior to performing the projections, and obtain the result of Theorem~\ref{thm:main} in terms of the standardized matrix.  One obtains an alternative bound by selecting pairs of distinct rows $\vct{a_r}$ and $\vct{a_s}$ with probability proportional to the product $\|\vct{a_r}\|_2^2\|\vct{a_s}\|_2^2$, following the strategy of Strohmer and Vershynin~\cite{SV09:Arand} in the standard randomized Kaczmarz algorithm.  Defining the normalized variables $\vct{\widetilde{a}_r} = \vct{{a}_r}/\|\vct{a_r}\|_2$  and $\widetilde{b}_r = b_r /\|\vct{a_r}\|_2$, the algorithm proceeds as before with these substitutions in place.  We define the coherence parameters~\eqref{mus} in terms of the normalized rows, and we define a new matrix $\hat{\mtx{A}} := \mtx{DA}$, where $\mtx{D}$ is a diagonal matrix with entries $\mtx{D}_{jj} = \left(\|\mtx{A}\|_F^2 - \|\vct{a_j}\|_2^2\right)^{1/2}$.  Then one follows the proof of Theorem~\ref{thm:main} to obtain the analogous convergence bound in the non-standardized case,

$$
\mathbb{E}\|\vct{x} - \vct{x_k}\|_2^2 \leq \left(\left(1 - \frac{1}{\hat{R}}\right)^2 - \frac{D}{\hat{R}}\right)^k\|\vct{x} - \vct{x}_{0}\|_2^2 ,
$$
where $D$ is as in Theorem~\ref{thm:main}, and $\hat{R} = \| \mtx{\hat{A}} \|_{F}^2 \|\mtx{\hat{A}}^{-1}\|^2$ denotes the scaled condition number of $\hat{\mtx{A}}$. 

\end{remarks}

Figure~\ref{fig:D} shows the value of $D$ of Theorem \ref{thm:main} for various values of $\Delta$ and $\delta$.  This demonstrates that the improvement factor $D$ is maximized when $\delta = \Delta = \frac{\sqrt{5}-1}{2} \approx 0.62$, giving a value of $D \approx 0.1$.  %\notate{Use the following figure instead for $D$?}

  \begin{figure}[h!]
\begin{center}
\includegraphics[width=3in]{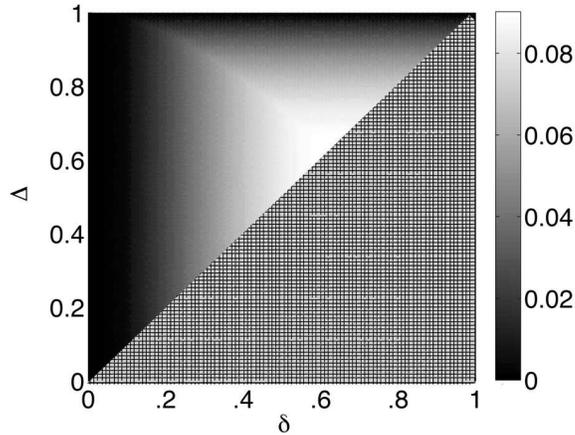}    
\end{center}
\caption{A plot of the improved convergence factor $D$ as a function of the coherence parameters $\delta$ and $\Delta \geq \delta$. }\label{fig:D}
\end{figure}

\subsection{Organization} Next in Section~\ref{sec:numerics} we present some numerical results demonstrating the improvements offered by the two-subspace randomized Kaczmarz method.  We then prove our main result, Theorem~\ref{thm:main} in Section~\ref{sec:proofs}.  We end with a brief discussion in Section~\ref{sec:conclusion}.

\section{Numerical Results}\label{sec:numerics}

In this section we perform several experiments to compare the convergence rate of the two-subspace randomized Kaczmarz with that of the standard randomized Kaczmarz method.  As discussed, both methods exhibit exponential convergence in expectation, but when the rows of the matrix $\mtx{A}$ are coherent, the two-subspace method exhibits much faster convergence. 

To test these methods, we construct various types of $300\times 100$ matrices $\mtx{A}$.  To acquire a range of $\delta$ and $\Delta$, we set the entries of $\mtx{A}$ to be independent identically distributed uniform random variables on some interval $[c, 1]$.  Changing the value of $c$ will appropriately change the values of $\delta$ and $\Delta$.  Note that there is nothing special about this interval, other intervals (both negative and positive or both) of varying widths yield the same results.  For each matrix construction, both the randomized Kaczmarz and two-subspace randomized methods are run with the same fixed initial (randomly selected) estimate and fixed matrix.  The estimation errors for each method are computed at each iteration and averaged over $40$ trials.  The heavy lines depict the average error over these trials, and the shaded region describes the minimum and maximum errors.  Since each iteration of the two-subspace method utilizes two rows of the matrix $\mtx{A}$, we will equate a single iteration of the standard method with two iterations in Algorithm~\ref{alg:rk} for fair comparison.

Figure~\ref{fig:highCoh} demonstrates the regime where the two-subspace method offers the most improvement over the standard method.  Here the matrix $A$ has highly coherent rows, with $\delta \approx \Delta$. 

  \begin{figure}[h!]
\begin{center}
\includegraphics[width=3in]{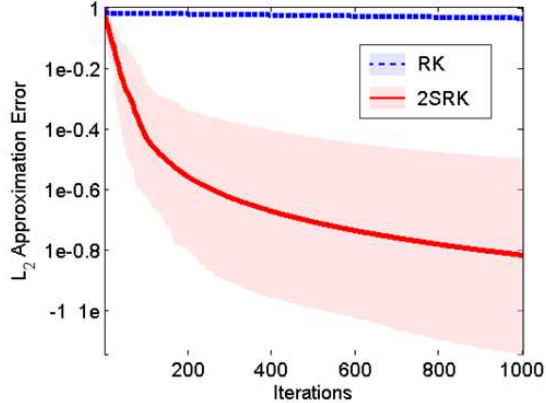}    
\end{center}
\caption{A log-linear plot of the error per iteration for the randomized Kaczmarz (RK) and two-subspace RK (2SRK) methods.  Matrix $\mtx{A}$ has highly coherent rows, with entries uniformly distributed on $[0.9, 1]$ yielding $\delta = 0.998$ and $\Delta = 0.999$. }\label{fig:highCoh}
\end{figure}

Our result Theorem~\ref{thm:main} suggests that as $\delta$ becomes smaller the two-subspace method should offer less and less improvements over the standard method.  When $\delta=0$ the convergence rate bound of Theorem~\ref{thm:main} is precisely the same as that of the standard method~\eqref{SV}.  Indeed, we see this precise behavior as is depicted in Figure~\ref{fig:others}. 

\begin{figure}[h!]
\begin{center}
$\begin{array}{c@{\hspace{.1in}}c}
(a)\includegraphics[width=2.5in]{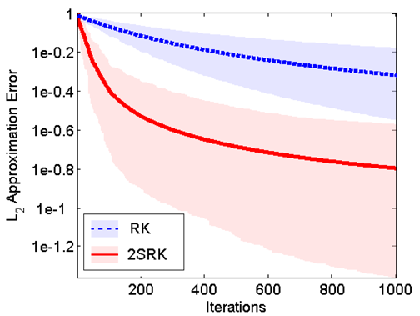} &
(b)\includegraphics[width=2.5in]{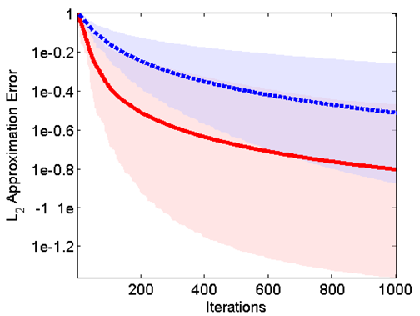} \\
(c)\includegraphics[width=2.5in]{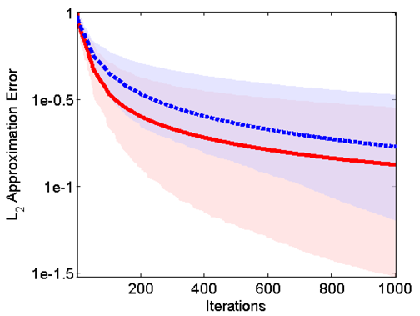} &
(d)\includegraphics[width=2.5in]{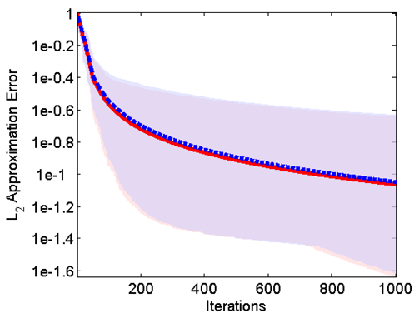}\\
\end{array}$    
\end{center}
\caption{A log-linear plot of the error per iteration for the randomized Kaczmarz (RK) and two-subspace RK (2SRK) methods.  Matrix $\mtx{A}$ has entries uniformly distributed on $[c, 1]$ with coherence parameters (a) $\delta = 0.937$ and $\Delta = 0.986$ ($c=0.5$), (b) $\delta = 0.760$ and $\Delta = 0.954$ ($c=0.2$), (c) $\delta = 0.394$ and $\Delta = 0.870$ ($c=-0.1$), and (d) $\delta = 0$ and $\Delta = 0.740$ ($c=-0.5$). }\label{fig:others}
\end{figure}

\section{Main Results} \label{sec:proofs}

We now present the proof of Theorem~\ref{thm:main}.  We first derive a bound for the expected progress made in a single iteration.  Since the two row indices are chosen independently at each iteration, we will be able to apply the bound recursively to obtain the desired overall expected convergence rate.  

Our first lemma shows that the expected estimation error in a single iteration of the two-subspace Kaczmarz method is decreased by a factor strictly less than that of the standard randomized method.

\begin{lemma}\label{lem:main}
 Let $\vct{x_{k}}$ denote the estimation to the solution of $\mtx{A}\vct{x} = \vct{b}$ in the $k$th iteration of the two-subspace Kaczmarz method.  Denote the rows of $\mtx{A}$ by $\vct{a}_1, \vct{a}_2, \ldots \vct{a}_m$.  Then we have the following bound,
$$
\mathbb{E}\|\vct{x} - \vct{x_{k}}\|_2^2 \leq \left(1 - \frac{1}{R}\right)^2\|\vct{x} - \vct{x_{k-1}}\|_2^2 - \frac{1}{m^2-m}\sum_{r < s} C_{r,s}^2\left(\langle \vct{x} - \vct{x_{k-1}}, \vct{a_{r}}\rangle^2 + \langle \vct{x} - \vct{x_{k-1}}, \vct{a_{s}}\rangle^2\right),
$$

where $C_{r,s} = \frac{|\mu_{r,s}|-\mu_{r,s}^2}{\sqrt{1-\mu_{r,s}^2}}$, $\mu_{r,s} = \<\vct{a_r}, \vct{a_s}\>$, and $R = \|\mtx{A}^{-1}\|^2\|\mtx{A}\|_F^2$ denotes the scaled condition number.

\end{lemma}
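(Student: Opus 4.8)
The plan is to write down the exact one-step decrease $\|\vct{x}-\vct{x_{k-1}}\|_2^2-\|\vct{x}-\vct{x_k}\|_2^2$ for a fixed pair of rows $\vct{a_r},\vct{a_s}$, split it into a ``two ordinary Kaczmarz steps'' part plus a ``bonus'' part, and then take expectations over the random pair. Abbreviate $\vct{e}=\vct{x}-\vct{x_{k-1}}$, $\alpha_r=\langle\vct{a_r},\vct{e}\rangle$, $\alpha_s=\langle\vct{a_s},\vct{e}\rangle$, $\mu=\mu_{r,s}$; I will run the argument for real scalars, the complex case being analogous. First I would record that the two updates $\vct{x_{k-1}}\to\vct{y_k}\to\vct{x_k}$ of Algorithm~\ref{alg:r2k} are along the mutually orthogonal directions $\vct{a_s}$ and $\vct{v_k}$, and that $\vct{x}-\vct{x_k}$ is orthogonal to both of these directions (since $\langle\vct{a_s},\vct{x_k}\rangle=b_s=\langle\vct{a_s},\vct{x}\rangle$ and $\langle\vct{v_k},\vct{x_k}\rangle=\beta_k=\langle\vct{v_k},\vct{x}\rangle$, using $\vct{v_k}\perp\vct{a_s}$). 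Two applications of the Pythagorean identity then give $\|\vct{x}-\vct{x_k}\|_2^2=\|\vct{x}-\vct{x_{k-1}}\|_2^2-\|\vct{y_k}-\vct{x_{k-1}}\|_2^2-\|\vct{x_k}-\vct{y_k}\|_2^2$, and a short computation evaluates the two subtracted terms as $\alpha_s^2$ and $(\alpha_r-\mu\alpha_s)^2/(1-\mu^2)$, respectively. Since $\tfrac{1}{1-\mu^2}=1+\tfrac{\mu^2}{1-\mu^2}$, this rearranges to
\begin{equation*}
\|\vct{x}-\vct{x_{k-1}}\|_2^2-\|\vct{x}-\vct{x_k}\|_2^2=\big(\alpha_s^2+(\alpha_r-\mu\alpha_s)^2\big)+(\alpha_r-\mu\alpha_s)^2\,\frac{\mu^2}{1-\mu^2},
\end{equation*}
where the first group is precisely the decrease of two consecutive ordinary Kaczmarz projections (onto row $s$, then onto row $r$) and the second is the extra gain from using a two-dimensional subspace.

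I would then take the expectation over the uniformly random ordered pair $(r,s)$, $r\ne s$ (the iterate depends only on the unordered pair, so this is harmless). For the ``two-step'' group, condition on $s$: then $\vct{y_k}$ is determined, $r$ is uniform over $\{1,\dots,m\}\setminus\{s\}$, and $\mathbb{E}_r\big[(\alpha_r-\mu\alpha_s)^2\mid s\big]=\tfrac{1}{m-1}\sum_{r\ne s}\langle\vct{a_r},\vct{x}-\vct{y_k}\rangle^2=\tfrac{1}{m-1}\|\mtx{A}(\vct{x}-\vct{y_k})\|_2^2$, the missing $r=s$ summand vanishing because $\vct{y_k}$ lies on the $s$-th solution hyperplane. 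Bounding $\|\mtx{A}(\vct{x}-\vct{y_k})\|_2^2\ge\|\vct{x}-\vct{y_k}\|_2^2/\|\mtx{A}^{-1}\|^2$, substituting $\|\vct{x}-\vct{y_k}\|_2^2=\|\vct{e}\|_2^2-\alpha_s^2$ and $R=m\|\mtx{A}^{-1}\|^2$, and then averaging over $s$ with $\mathbb{E}_s[\alpha_s^2]=\tfrac1m\|\mtx{A}\vct{e}\|_2^2\ge\|\vct{e}\|_2^2/R$, I expect the ``two-step'' group to contribute at least $\big(\tfrac1R+\tfrac{m}{(m-1)R}(1-\tfrac1R)\big)\|\vct{e}\|_2^2\ge\big(1-(1-\tfrac1R)^2\big)\|\vct{e}\|_2^2$. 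For the ``bonus'' group, the key observation is that summing over \emph{ordered} pairs symmetrizes $r\leftrightarrow s$: pairing $(r,s)$ with $(s,r)$ and using the elementary inequality $(\alpha_r-\mu\alpha_s)^2+(\alpha_s-\mu\alpha_r)^2\ge(1-|\mu|)^2(\alpha_r^2+\alpha_s^2)$ (which is just $(\alpha_r\mp\alpha_s)^2\ge0$) together with $(1-|\mu|)^2\mu^2/(1-\mu^2)=C_{r,s}^2$ gives $\sum_{r\ne s}(\alpha_r-\mu\alpha_s)^2\tfrac{\mu^2}{1-\mu^2}\ge\sum_{r<s}C_{r,s}^2(\alpha_r^2+\alpha_s^2)$. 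Adding the two contributions and dividing by $m^2-m$ produces exactly the stated inequality.

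The Pythagorean identities and the evaluation of the two subtracted norms are routine. The step I expect to require the most care is the conditioning argument for the ``two-step'' group: one must check that restricting the second projection to the $m-1$ rows other than $s$ still yields the full $(1-\tfrac1R)^2$ factor, which works because the factor $\tfrac{m}{m-1}$ lost in passing to the conditional expectation is exactly recovered by the fact that $\|\mtx{A}(\vct{x}-\vct{y_k})\|_2$ controls $\|\vct{x}-\vct{y_k}\|_2$ through $\|\mtx{A}^{-1}\|$ on the range of $\mtx{A}$. One also needs the coefficient $1-\tfrac{m}{(m-1)R}$ of $\alpha_s^2$ to be nonnegative; this holds because pairwise linear independence of the rows together with $m>n$ forces $n\ge2$, hence $R\ge n\ge2$ and $\tfrac{m}{(m-1)R}\le1$.
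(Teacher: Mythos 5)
Your proof is correct, and its skeleton matches the paper's: both split the one-step error decrease into the decrease achieved by two ordinary Kaczmarz projections plus a ``bonus'' term equal (up to the $r\leftrightarrow s$ swap) to $\mu^2(\alpha_s-\mu\alpha_r)^2/(1-\mu^2)$, and both handle the bonus term by symmetrizing over ordered pairs and applying the same elementary inequality (the paper phrases it as $(\theta u-\pi v)^2+(\theta v-\pi u)^2\ge(|\pi|-|\theta|)^2(u^2+v^2)$ with $\theta=\mu^2/\sqrt{1-\mu^2}$, $\pi=\mu/\sqrt{1-\mu^2}$, which is exactly your $(1-|\mu|)^2\mu^2/(1-\mu^2)=C_{r,s}^2$ computation). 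Where you genuinely diverge is in the treatment of the two-ordinary-steps part. The paper identifies it with the error of two literal randomized-Kaczmarz iterates $\vct{z},\vct{z'}$ and then simply invokes the Strohmer--Vershynin bound twice, disposing of the fact that $r\ne s$ with the one-line remark that sampling without replacement ``can only speed the convergence.'' You instead prove the $(1-\tfrac1R)^2$ factor from scratch by conditioning on $s$, observing that the excluded $r=s$ summand vanishes because $\vct{y_k}$ lies on the $s$-th hyperplane, and checking that the resulting $\tfrac{m}{m-1}$ gain absorbs the loss from restricting to $m-1$ rows (together with the sign check $R\ge n\ge 2\ge\tfrac{m}{m-1}$). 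This is a more self-contained and rigorous route through the one step the paper leaves informal; the cost is the extra bookkeeping of the conditional expectation, which you have carried out correctly.
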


\begin{proof}
  We fix an iteration $k$ and for convenience refer to $\vct{v}_k$, $\mu_k$, and $\vct{y}_k$ as $\vct{v}$, $\mu$, and $\vct{y}$, respectively.  We will also denote $\gamma = \langle \vct{a_{r}}, \vct{v}\rangle$.  
  
 First, observe that by the definitions of $\vct{v}$ and $\vct{x_{k}}$ we have
$$
\vct{x_{k}} = \vct{x_{k-1}} + \langle \vct{x} - \vct{x_{k-1}}, \vct{a_{s}}\rangle \vct{a_{s}} + \langle \vct{x} - \vct{x_{k-1}}, \vct{v}\rangle \vct{v}.
$$
Since $\vct{a_{s}}$ and $\vct{v}$ are orthonormal, this gives the estimate
\begin{equation}\label{eq:new}
\|\vct{x} - \vct{x_{k}}\|_2^2 = \|\vct{x} - \vct{x_{k-1}}\|_2^2 - |\langle \vct{x} - \vct{x_{k-1}}, \vct{a_{s}}\rangle|^2 - |\langle \vct{x} - \vct{x_{k-1}}, \vct{v}\rangle|^2
\end{equation}

We wish to compare this error with the error from the standard randomized Kaczmarz method.  Since we utilize two rows per iteration in the two-subspace Kaczmarz method, we compare its error with the error from two iterations of the standard method.   Let $\vct{z}$ and $\vct{z'}$ be two subsequent estimates in the standard method following the estimate  $\vct{x_{k-1}}$, and assume $\vct{z} \neq \vct{z'}$.  That is,
\begin{equation}\label{eq:z}
\vct{z} = \vct{x_{k-1}} +  (b_r - \langle \vct{x_{k-1}}, \vct{a_{r}}\rangle)\vct{a_{r}} \quad\text{and}\quad
\vct{z'} = \vct{z} +  (b_s - \langle \vct{z}, \vct{a_{s}}\rangle)\vct{a_{s}}.
\end{equation} 

Recalling the definitions of $\vct{v}$, $\mu$ and $\gamma$, we have
\begin{equation}\label{eq:a}
\vct{a_{r}} = \mu\vct{a_{s}} + \gamma \vct{v}\quad\text{with}\quad \mu^2 + \gamma^2 = 1.
\end{equation}

Substituting this into~\eqref{eq:z} yields
$$
\vct{z} = \vct{x_{k-1}} + \mu\langle \vct{x} - \vct{x_{k-1}}, \vct{a_{r}}\rangle \vct{a_{s}} + \gamma\langle\vct{x} - \vct{x_{k-1}}, \vct{a_{r}}\rangle\vct{v}.
$$

Now substituting this into~\eqref{eq:z} and taking the orthogonality of $\vct{a_{s}}$ and $\vct{v}$ into account,
$$
\vct{z}' = \vct{x_{k-1}} + \langle \vct{x} - \vct{x_{k-1}}, \vct{a_{s}}\rangle \vct{a_{s}} + \gamma\langle\vct{x} - \vct{x_{k-1}}, \vct{a_{r}}\rangle\vct{v}.
$$

For convenience, let $\vct{e_{k-1}} = \vct{x} - \vct{x_{k-1}}$ denote the error in the $(k-1)$st iteration of two-subspace Kaczmarz. Then we have
\begin{align*}
\|\vct{x} - \vct{z'}\|_2^2 
&= \|\vct{e_{k-1}} - \langle \vct{e_{k-1}}, \vct{a_{s}}\rangle \vct{a_{s}} - \gamma\langle\vct{e_{k-1}}, \vct{a_{r}}\rangle\vct{v}\|_2^2\\
&= \|\vct{e_{k-1}} - \langle \vct{e_{k-1}}, \vct{a_{s}}\rangle \vct{a_{s}} - \langle \vct{e_{k-1}}, \vct{v}\rangle\vct{v} - (\gamma\langle\vct{e_{k-1}}, \vct{a_{r}}\rangle - \langle \vct{e_{k-1}}, \vct{v}\rangle)\vct{v}\|_2^2\\
&= \|\vct{e_{k-1}}\|_2^2 - |\langle \vct{e_{k-1}}, \vct{a_{s}}\rangle|^2 - |\langle \vct{e_{k-1}}, \vct{v}\rangle|^2 + |\gamma\langle\vct{e_{k-1}}, \vct{a_{r}}\rangle - \langle \vct{e_{k-1}}, \vct{v}\rangle|^2.
\end{align*}
The third equality follows from the orthonormality of $\vct{a_{s}}$ and $\vct{v}$.
We now expand the last term,
\begin{align*}
|\gamma\langle\vct{e_{k-1}}, \vct{a_r}\rangle - \langle \vct{e_{k-1}}, \vct{v}\rangle|^2 &=
|\gamma\langle\vct{e_{k-1}}, \mu\vct{a_{s}} + \gamma\vct{v}\rangle - \langle \vct{e_{k-1}}, \vct{v}\rangle|^2\\
&=  |\gamma^2\langle\vct{e_{k-1}}, \vct{v}\rangle + \gamma\mu\langle \vct{e_{k-1}}, \vct{a_{s}} \rangle- \langle \vct{e_{k-1}}, \vct{v}\rangle|^2\\
&= |\mu^2\langle\vct{e_{k-1}}, \vct{v}\rangle - \gamma\mu\langle \vct{e_{k-1}}, \vct{a_{s}} \rangle|^2.
\end{align*}
This gives
\begin{align*}
\|\vct{x} - \vct{z'}\|_2^2 
&= \|\vct{e_{k-1}}\|_2^2 - |\langle \vct{e_{k-1}}, \vct{a_{s}}\rangle|^2 - |\langle \vct{e_{k-1}}, \vct{v}\rangle|^2 +|\mu^2\langle\vct{e_{k-1}}, \vct{v}\rangle - \gamma\mu\langle \vct{e_{k-1}}, \vct{a_{s}} \rangle|^2.
\end{align*}

Combining this identity with~\eqref{eq:new}, we now relate the expected error in the two-subspace Kaczmarz algorithm, $\mathbb{E}\|\vct{x} - \vct{x_{k}}\|_2^2$ to the expected error of the standard method, $\mathbb{E}\|\vct{x} - \vct{z'}\|_2^2$ as follows:
\begin{equation}\label{eq:stand}
\mathbb{E}\|\vct{x} - \vct{x_{k}}\|_2^2= \mathbb{E}\|\vct{x} - \vct{z'}\|_2^2 - \mathbb{E}|\mu^2\langle\vct{e_{k-1}}, \vct{v}\rangle - \gamma\mu\langle \vct{e_{k-1}}, \vct{a_{s}} \rangle|^2.
\end{equation}

It thus remains to analyze the last term. Since we select the two rows $r$ and $s$ independently from the uniform distribution over pairs of distinct rows, the expected error is just the average of the error over all $m^2 - m$ ordered choices ${r,s}$.  To this end we introduce the notation $\mu_{r,s} = \langle \vct{a_{r}}, \vct{a_{s}}\rangle$.  Then by definitions of $\vct{v}$, $\mu$ and $\gamma$,  
\begin{align*}
\mathbb{E}|\mu^2\langle\vct{e_{k-1}}&, \vct{v}\rangle - \gamma\mu\langle \vct{e_{k-1}}, \vct{a_{s}} \rangle|^2\\
&= \frac{1}{m^2-m}\sum_{r\ne s} \left|\frac{\mu_{r,s}^2}{\sqrt{1-\mu_{r,s}^2}} (\langle \vct{e_{k-1}}, \vct{a_{r}}\rangle - \mu_{r,s}\langle \vct{e_{k-1}}, \vct{a_{s}}\rangle) - \mu_{r,s}\sqrt{1 - \mu_{r,s}^2}\langle \vct{e_{k-1}}, \vct{a_{s}}\rangle  \right|^2\\
&= \frac{1}{m^2-m}\sum_{r\ne s} \left|\frac{\mu_{r,s}^2}{\sqrt{1-\mu_{r,s}^2}} \langle \vct{e_{k-1}}, \vct{a_{r}}\rangle - \left(\frac{\mu_{r,s}^3}{\sqrt{1-\mu_{r,s}^2}} + \mu_{r,s}\sqrt{1 - \mu_{r,s}^2} \right)\langle \vct{e_{k-1}}, \vct{a_{s}}\rangle\right|^2\\
&= \frac{1}{m^2-m}\sum_{r\ne s} \left|\frac{\mu_{r,s}^2}{\sqrt{1-\mu_{r,s}^2}} \langle \vct{e_{k-1}}, \vct{a_{r}}\rangle - \left(\frac{\mu_{r,s}}{\sqrt{1-\mu_{r,s}^2}} \right)\langle \vct{e_{k-1}}, \vct{a_{s}}\rangle\right|^2.\\
\end{align*}

We now recall that for any $\theta,\pi,u,$ and $v$,
$$
(\theta u - \pi v)^2 + (\theta v - \pi u)^2 \geq (|\pi| - |\theta|)^2(u^2+v^2). 
$$
  
  Setting $\theta_{r,s} = \frac{\mu_{r,s}^2}{\sqrt{1-\mu_{r,s}^2}}$ and $\pi_{r,s} = \frac{\mu_{r,s}}{\sqrt{1-\mu_{r,s}^2}} $, we have by rearranging terms in the symmetric sum,

\begin{align}\label{eq:bound}
\mathbb{E}|\mu^2\langle\vct{e_{k-1}}&, \vct{\theta}\rangle - \gamma\mu\langle \vct{e_{k-1}}, \vct{a_{s}} \rangle|^2\notag\\
&= \frac{1}{m^2-m}\sum_{r\ne s} \left|\theta_{r,s} \langle \vct{e_{k-1}}, \vct{a_{r}}\rangle - \pi_{r,s}\langle \vct{e_{k-1}}, \vct{a_{s}}\rangle\right|^2\notag\\
&= \frac{1}{m^2-m}\sum_{r < s} \left|\theta_{r,s} \langle \vct{e_{k-1}}, \vct{a_{r}}\rangle - \pi_{r,s}\langle \vct{e_{k-1}}, \vct{a_{s}}\rangle\right|^2 + \left|\theta_{r,s} \langle \vct{e_{k-1}}, \vct{a_{s}}\rangle - \pi_{r,s}\langle \vct{e_{k-1}}, \vct{a_{r}}\rangle\right|^2\notag\\
&\geq \frac{1}{m^2-m}\sum_{r < s} (|\pi_{r,s}| - |\theta_{r,s}|)^2\left(\langle \vct{e_{k-1}}, \vct{a_{r}}\rangle^2 + \langle \vct{e_{k-1}}, \vct{a_{s}}\rangle^2\right) \notag\\
&= \frac{1}{m^2-m}\sum_{r < s} \Big( \frac{|\mu_{r,s}|-\mu_{r,s}^2}{\sqrt{1-\mu_{r,s}^2}} \Big)^2\left( \langle \vct{e_{k-1}}, \vct{a_{r}} \rangle^2 + \langle \vct{e_{k-1}}, \vct{a_{s}}\rangle^2 \right).
\end{align}

Since selecting two rows without replacement (i.e. guaranteeing not to select the same row back to back) can only speed the convergence, we have from~\eqref{SV} that the error from the standard randomized Kaczmarz method satisfies
$$
\E\|\vct{x}-\vct{z}'\|_2^2 \leq (1 - 1/R)^2\|\vct{x} - \vct{x_{k-1}}\|_2^2.
$$

Combining this with~\eqref{eq:stand} and~\eqref{eq:bound} yields the desired result.

\end{proof}

Although the result of Lemma~\ref{lem:main} is tighter, the coherence parameters $\delta$ and $\Delta$ of~\eqref{mus} allow us to present the following result which is not as strong but simpler to state.

\begin{lemma}\label{lem:simple}
 Let $\vct{x_{k}}$ denote the estimation to $\mtx{A}\vct{x} = \vct{b}$ in the $k$th iteration of the two-subspace Kaczmarz method.  Denote the rows of $\mtx{A}$ by $\vct{a}_1, \vct{a}_2, \ldots \vct{a}_m$.  Then 
$$
\mathbb{E}\|\vct{x} - \vct{x_{k}}\|_2^2 \leq \left(\left(1 - \frac{1}{R}\right)^2 - \frac{D}{R} \right)\|\vct{x} - \vct{x_{k-1}}\|_2^2 ,
$$

where $D = \min\Big\{ \frac{\delta^2(1-\delta)}{1+\delta}, \frac{\Delta^2(1-\Delta)}{1+\Delta} \Big\}$, $\delta$ and $\Delta$ are the coherence parameters as in~\eqref{mus}, and $R = \|\mtx{A}^{-1}\|^2\|\mtx{A}\|_F^2$ denotes the scaled condition number.
\end{lemma}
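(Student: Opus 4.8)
The plan is to deduce Lemma~\ref{lem:simple} directly from the sharper Lemma~\ref{lem:main} by lower-bounding the data-dependent correction term in terms of the coherence parameters and the scaled condition number. Write $\vct{e} = \vct{x} - \vct{x_{k-1}}$ for brevity. Lemma~\ref{lem:main} gives
$$
\mathbb{E}\|\vct{x} - \vct{x_{k}}\|_2^2 \leq \left(1 - \frac{1}{R}\right)^2\|\vct{e}\|_2^2 - \frac{1}{m^2-m}\sum_{r < s} C_{r,s}^2\left(\langle \vct{e}, \vct{a_{r}}\rangle^2 + \langle \vct{e}, \vct{a_{s}}\rangle^2\right),
$$
so it suffices to show that the subtracted sum is at least $\frac{D}{R}\|\vct{e}\|_2^2$.

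First I would simplify the coefficient: since $C_{r,s} = \frac{|\mu_{r,s}|-\mu_{r,s}^2}{\sqrt{1-\mu_{r,s}^2}}$, we have $C_{r,s}^2 = \frac{\mu_{r,s}^2(1-|\mu_{r,s}|)^2}{(1-|\mu_{r,s}|)(1+|\mu_{r,s}|)} = g(|\mu_{r,s}|)$, where $g(t) = \frac{t^2(1-t)}{1+t}$ on $[0,1]$. The key step is to prove $g(t) \geq D$ for every $t\in[\delta,\Delta]$, with $D = \min\{g(\delta), g(\Delta)\}$. This follows from unimodality of $g$: the derivative $g'(t) = \frac{2t(1-t-t^2)}{(1+t)^2}$ is positive on $\big(0, \tfrac{\sqrt{5}-1}{2}\big)$ and negative on $\big(\tfrac{\sqrt{5}-1}{2}, 1\big)$, so $g$ increases then decreases, and hence on any closed subinterval $[\delta,\Delta]$ its minimum is attained at an endpoint. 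Since by definition $\delta \leq |\mu_{r,s}| \leq \Delta$ for all $r\neq s$, this yields $C_{r,s}^2 = g(|\mu_{r,s}|) \geq D$ for every pair.

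Next I would evaluate the combinatorial sum. Having pulled out the constant $D$, I need $\sum_{r<s}\big(\langle \vct{e}, \vct{a_r}\rangle^2 + \langle \vct{e}, \vct{a_s}\rangle^2\big)$: each index $j$ appears in exactly $m-1$ of the unordered pairs, so this equals $(m-1)\sum_{j=1}^m \langle \vct{e}, \vct{a_j}\rangle^2 = (m-1)\|\mtx{A}\vct{e}\|_2^2$. Combining with the bound on $C_{r,s}^2$ and $m^2-m = m(m-1)$,
$$
\frac{1}{m^2-m}\sum_{r < s} C_{r,s}^2\left(\langle \vct{e}, \vct{a_{r}}\rangle^2 + \langle \vct{e}, \vct{a_{s}}\rangle^2\right) \geq \frac{D(m-1)}{m(m-1)}\|\mtx{A}\vct{e}\|_2^2 = \frac{D}{m}\|\mtx{A}\vct{e}\|_2^2.
$$
Finally, since $\mtx{A}$ is standardized, $\|\mtx{A}\|_F^2 = m$ and thus $R = m\|\mtx{A}^{-1}\|^2$; and the definition of $\|\mtx{A}^{-1}\|$ gives $\|\mtx{A}\vct{e}\|_2^2 \geq \|\vct{e}\|_2^2/\|\mtx{A}^{-1}\|^2$. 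Hence $\frac{D}{m}\|\mtx{A}\vct{e}\|_2^2 \geq \frac{D}{m\|\mtx{A}^{-1}\|^2}\|\vct{e}\|_2^2 = \frac{D}{R}\|\vct{e}\|_2^2$, and substituting back into the Lemma~\ref{lem:main} bound completes the proof.

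I expect the only step requiring any thought to be the claim $g(t)\geq D$ on $[\delta,\Delta]$ — i.e. the unimodality of $g$ and the observation that a unimodal function on a closed interval is minimized at an endpoint; the pair-counting identity and the appeal to the definitions of $R$ and $\|\mtx{A}^{-1}\|$ are mechanical.
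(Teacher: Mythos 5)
Your proposal is correct and follows essentially the same route as the paper's proof: invoke Lemma~\ref{lem:main}, bound $C_{r,s}^2 \geq D$ uniformly, count that each index appears in $m-1$ pairs to reduce the double sum to $(m-1)\|\mtx{A}\vct{e}\|_2^2$, and finish with $\|\mtx{A}\vct{e}\|_2^2 \geq \|\vct{e}\|_2^2/\|\mtx{A}^{-1}\|^2$ and $\|\mtx{A}\|_F^2=m$. The only difference is that you supply the justification for $C_{r,s}^2\geq D$ (unimodality of $g(t)=t^2(1-t)/(1+t)$ with peak at $(\sqrt{5}-1)/2$, so the minimum over $[\delta,\Delta]$ is attained at an endpoint), which the paper asserts without proof.
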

\begin{proof}
By Lemma~\ref{lem:main} we have
\begin{equation}\label{eq:lemma}
\nonumber
\mathbb{E}\|\vct{x} - \vct{x_{k}}\|_2^2 \leq \left(1 - \frac{1}{R}\right)^2\|\vct{x} - \vct{x_{k-1}}\|_2^2 - \frac{1}{m^2-m}\sum_{r < s} C_{r,s}^2\left(\langle \vct{x} - \vct{x_{k-1}}, \vct{a_{r}}\rangle^2 + \langle \vct{x} - \vct{x_{k-1}}, \vct{a_{s}}\rangle^2\right),
\end{equation}
where 
$$
C_{r,s} = \frac{|\langle\vct{a_{r}}, \vct{a_{s}}\rangle|-\langle\vct{a_{r}}, \vct{a_{s}}\rangle^2}{\sqrt{1-\langle\vct{a_{r}}, \vct{a_{s}}\rangle^2}}.
$$

By the assumption that $\delta \leq |\langle \vct{a_{r}}, \vct{a_{s}} \rangle| \leq \Delta$, we have %\notate{I think we we can make this bound stronger as below, what do you think}
$$
C_{r,s}^2 \geq \min\Big\{ \frac{\delta^2(1-\delta)}{1+\delta}, \frac{\Delta^2(1-\Delta)}{1+\Delta} \Big\} = D.
$$

Thus we have that 
\begin{align}\label{eq:final}
\frac{1}{m^2-m}\sum_{r < s} &C_{r,s}^2\left(\langle \vct{x} - \vct{x_{k-1}}, \vct{a_{r}}\rangle^2 + \langle \vct{x} - \vct{x_{k-1}}, \vct{a_{s}}\rangle^2\right)\notag\\
&\geq \frac{D}{m^2-m}\sum_{r < s} \left(\langle \vct{x} - \vct{x_{k-1}}, \vct{a_{r}}\rangle^2 + \langle \vct{x} - \vct{x_{k-1}}, \vct{a_{s}}\rangle^2\right)\notag\\
&= \frac{D(m-1)}{m^2-m}\sum_{r=1}^m \langle \vct{x} - \vct{x_{k-1}}, \vct{a_{r}}\rangle^2\notag\\
&\geq \frac{D}{m}\cdot\frac{\|\vct{x} - \vct{x_{k-1}}\|_2^2}{\|\mtx{A}^{-1}\|_2^2}.
\end{align}

In the last inequality we have employed the fact that for any $\vct{z}$,
$$
\sum_{r=1}^m \langle \vct{z}, \vct{a_{r}}\rangle^2 \geq \frac{\|\vct{z}\|_2^2}{\|\mtx{A}^{-1}\|_2^2}.
$$

Combining~\eqref{eq:final} and~\eqref{eq:lemma} along with the definition of $R$ yields the claim.

\end{proof}

Applying Lemma~\ref{lem:simple} recursively and using the fact that the selection of rows in each iteration is independent yields our main result Theorem~\ref{thm:main}.

\section{Conclusion}\label{sec:conclusion}

As is evident from Theorems~\ref{thm:main}, the two-subspace Kaczmarz method provides exponential convergence in expectation to the solution of $\mtx{A}\vct{x} = \vct{b}$.  The constant in the rate of convergence for the two-subspace Kaczmarz method is at most equal to that of the best known results for the randomized Kaczmarz method~\eqref{SV}.  When the matrix $\mtx{A}$ has many correlated rows, the constant is significantly lower than that of the standard method, yielding substantially faster convergence.  This has positive implications for many applications such as nonuniform sampling in Fourier analysis, as discussed in Section~\ref{sec:intro}. 

We emphasize that the bounds presented in our main theorems are weaker than what we actually prove, and that even when $\delta$ is small, if the rows of $\mtx{A}$ have many correlations, Lemma~\ref{lem:main} still guarantees improved convergence. For example, if the matrix $\mtx{A}$ has correlated rows but contains a pair of identical rows and a pair of orthogonal rows, it will of course be that $\delta = 0$ and $\Delta = 1$.  However, we see from the lemmas in the proofs of our main theorems that the two-subspace method still guarantees substantial improvement over the standard method.  Numerical experiments in cases like this produce results identical to those in Section~\ref{sec:numerics}. 

It is clear both from the numerical experiments and Theorem~\ref{thm:main} that the two-subspace Kaczmarz performs best when the correlations $\<\vct{a_r}, \vct{a_s}\>$ are bounded away from zero.  In particular, the two-subspace method offers the most improvement over the standard method when $\delta$ is large.  The dependence on $\Delta$, however, is not as straightforward.  Theorem~\ref{thm:main} suggests that when $\Delta$ is very close to $1$ the two-subspace method should provide similar convergence to the standard method.  However, in the experiments of Section~\ref{sec:numerics} we see that even when $\Delta \approx 1$, the two-subspace method still outperforms the standard method.  This exact dependence on $\Delta$ appears to be only an artifact of the proof. 

\subsection{Extensions to noisy systems and higher subspaces}
As is the case for many iterative algorithms, the presence of noise introduces complications both theoretically and empirically.  We show in~\cite{NW12:2srkTech} that with noise the two-subspace method provides expected exponential convergence to a noise threshold proportional to the largest entry of the noise vector $\vct{w}$.    
A further and important complication that noise introduces is \emph{semi-convergence}, a well-known effect in Algebraic Reconstruction Technique (ART) methods (see e.g.~\cite{ENP10:semi}).  It remains an open problem to determine the optimal stopping condition without knowledge of the solution $\vct{x}$.  See~\cite{NW12:2srkTech} for more details.  Alternatively, the optimal trade-off between speed and accuracy may be reached by employing a hybrid Kaczmarz algorithm which initially implements two-subspace Kaczmarz iterations to reach an approximate solution quickly, but switches to standard Kaczmarz iterations after a certain number of iterations to arrive at a more accurate final approximation.  %We leave this and the problem of improving the error threshold as future work.

Finally, a natural extension to our method would be to use more than two rows in each iteration.  Indeed, extensions of the two-subspace algorithm to arbitrary subspaces can be analyzed~\cite{NT12:paving}.

\bibliography{rk}

\end{document}